\theoremstyle{plain}
\newtheorem{theorem}{Theorem}
\newtheorem{lemma}[theorem]{Lemma}
\newtheorem{corollary}[theorem]{Corollary}
\newcommand\1{\mathbf{1}}
\newcommand\tr{\mathrm{tr}}
\newcommand\comp[1]{{\mkern2mu\overline{\mkern-2mu#1}}}
\begin{document}
\title{ Descriptive complexity of \\ controllable graphs}
\author{
Aida Abiad
\thanks{\texttt{a.abiad.monge@tue.nl},  Department of Mathematics and Computer Science, Eindhoven University of Technology, The Netherlands}
\thanks{Department of Mathematics: Analysis, Logic and Discrete Mathematics, Ghent University, Belgium} \thanks{ Department of Mathematics and Data Science, Vrije Universiteit Brussel, Belgium} 
\and
Anuj Dawar 
\thanks{\texttt{anuj.dawar@cl.cam.ac.uk}, Department of Computer Science and Technology, University of Cambridge, UK}
\and 
Octavio Zapata
\thanks{\texttt{octavio@im.unam.mx},  Instituto de Matem\'aticas, Universidad Nacional Aut\'onoma de M\'exico, M\'exico}
} 
\date{}
\maketitle
\begin{abstract}
Let $G$ be a graph on $n$ vertices with adjacency matrix $A$, and let $\1$ be the all-ones vector. 
We call $G$ \emph{controllable} if the set of vectors $\1, A\1, \dots, A^{n-1}\1$ spans the whole space $\mathbb{R}^n$. 
We characterize the isomorphism problem of controllable graphs in terms of other combinatorial, geometric and logical problems. 
We also describe a polynomial time algorithm for graph isomorphism that works for almost all graphs. 
\end{abstract}
 
\section{Introduction}
One of the most important open questions in spectral graph theory is to determine to what extent are graphs characterized by their spectrum (see e.g. \cite{van2003graphs, van2009developments}). 
The spectrum of a finite simple graph with $n$ vertices is the sequence of $n$ eigenvalues of its adjacency matrix, counting multiplicities.  
We say that a graph $G$ is \emph{determined by its spectrum} if the spectrum of $G$ is different from the spectrum of any other graph which is not isomorphic to $G$.  
For example, the complete graph $K_n$, the cycle $C_n$, and the path $P_n$ are graphs determined by their spectrum.
On the other hand, most trees \cite{schwenk1973almost} and strongly regular graphs  \cite{fon2002new} are examples of graphs that are not determined by their spectrum.
In fact given a graph $G$, there are criteria that allow us to construct a new graph with the same spectrum of $G$ but not isomorphic to $G$ (see e.g. \cite{godsil1982constructing}).
In contrast with this, it has been observed that randomly generated graphs tend to be determined by their spectrum and the spectrum of their complement   \cite{wang2006sufficient}.

It is clear that in general the spectrum is not sufficient to characterize a graph, but we would like to know the asymptotic behavior of the number of graphs determined by its spectrum.
Are they the majority or just a few?
What happens if, in addition to the spectrum of a graph, we consider the spectrum of its complement?
Can we find classes of graphs determined by their spectrum with non-trivial combinatorial properties?
It has been conjectured that the proportion of graphs on $n$ vertices which are determined by the spectrum and the spectrum of its complement goes to 1 as $n$ tends to infinity. 
Wang et al.~\cite{W2013,WX,MLW,W2017} have a number of results supporting this conjecture. 
They gave sufficient conditions for a graph to be determined by the spectrum and the spectrum of its complement. 
The majority of their results are proven for a wide class of graphs, the so-called controllable graphs. 
This class was introduced explicitly by Godsil and Severini \cite{godsil2010control} in their study of quantum walks on graphs. 

Let $G$ be a finite simple graph on $n$ vertices with adjacency matrix $A$.  
If we write  $\1$ for the vector with all entries equal to 1, then the \emph{walk matrix} of $G$ is, by definition, the $n\times n$ matrix%
\[
    W_G =  \big[ \1\quad A\1\quad \cdots \quad A^{n-1}\1 \big].
\] 
The $ij$-entry of the walk matrix $W_G$ counts the number of walks in $G$ of length $j-1$ starting at vertex $i$.  
 We say that the graph $G$ is \emph{controllable} if its walk matrix $W_{G}$ is invertible. 
If $G$ is regular of degree $k$, then $A\1 = k\1$; this implies that $W_G$ has rank 1. 
It follows that controllable graphs cannot be regular. 
We note also that if $P$ is a permutation matrix that commutes with $A$, then $PA^r\1 = A^r P\1 = A^r\1$ for all $r=0,\dots,n-1$, and hence $PW_G = W_G$; this implies that $P=I$ when $W_G$ is invertible.  
Therefore, the only automorphism of a controllable graph is the identity. 
The theory of controllable graphs was developed by Godsil in \cite{godsil2012controllable}, where it was conjectured that the proportion of graphs on $n$ vertices which are controllable goes to 1 as $n \to \infty$. 
It was later confirmed by O'Rourke and  Touri \cite{o2016conjecture} that indeed almost all graphs are controllable. 

\section{Generalized cospectrality and\\ walk-equivalence}
The \emph{characteristic polynomial}  of a graph $G$ on $n$ vertices with adjacency matrix $A$ is, by definition, the polynomial $\det(tI - A)$ (i.e., it is the characteristic polynomial of $A$).
We say that two graphs are \emph{cospectral} if they have the same characteristic polynomial. 
Since isomorphic graphs have permutation similar adjacency matrices, it follows that isomorphic graphs are cospectral.  
If we write $J = \1\1^T$ for the all-ones matrix, then the polynomial $\det(tI - sJ - A)$ is called the \emph{generalized characteristic polynomial} of $G$. 
Two graphs are called \emph{generalized cospectral} if they have the same polynomial $\det(tI - sJ - A)$ for all values of $s$. 
Since $\det(tI - sJ - A)$ is the characteristic polynomial of the matrix $A + sJ$, it follows that isomorphic graphs are generalized cospectral. 
Since $\det(tI - sJ - A) = \det(tI - A)$ when $s = 0$, we have that generalized cospectral graphs are cospectral. 
If we write $\comp{A}$ for the adjacency matrix of the complement of $G$, then we have that $\comp{A}=J-I-A$, and hence that $\det(tI- \comp{A}) = (-1)^n\det((-t-1)I - (-1) J - A)$.
Therefore, having cospectral complements is a necessary condition for being generalized cospectral. 
An important result of Johnson and Newman \cite{JN} says that this condition is also sufficient: cospectral graphs with cospectral complements are generalized cospectral. 
The smallest example of two non-isomorphic graphs that are generalized cospectral is shown in Figure \ref{fig:smallestRcospectralpair}. 

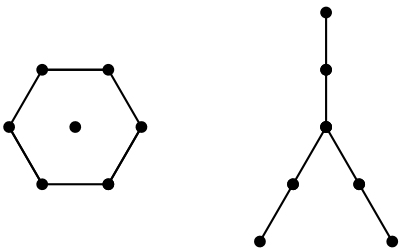
\begin{figure}
\begin{center}
\begin{tikzpicture}[x=0.55pt,y=0.55pt,yscale=-1,xscale=1, thick]
\draw   (123,100.5) -- (100.25,139.9) -- (54.75,139.9) -- (32,100.5) -- (54.75,61.1) -- (100.25,61.1) -- cycle ;
\draw    (54.75,61.1) -- (100.25,61.1) ;
\draw [shift={(100.25,61.1)}, rotate = 0] [color={rgb, 255:red, 0; green, 0; blue, 0 }  ][fill={rgb, 255:red, 0; green, 0; blue, 0 }  ][line width=0.75]      (0, 0) circle [x radius= 3.35, y radius= 3.35]   ;
\draw [shift={(54.75,61.1)}, rotate = 0] [color={rgb, 255:red, 0; green, 0; blue, 0 }  ][fill={rgb, 255:red, 0; green, 0; blue, 0 }  ][line width=0.75]      (0, 0) circle [x radius= 3.35, y radius= 3.35]   ;
\draw    (123,100.5) -- (100.25,139.9) ;
\draw [shift={(100.25,139.9)}, rotate = 120] [color={rgb, 255:red, 0; green, 0; blue, 0 }  ][fill={rgb, 255:red, 0; green, 0; blue, 0 }  ][line width=0.75]      (0, 0) circle [x radius= 3.35, y radius= 3.35]   ;
\draw [shift={(123,100.5)}, rotate = 120] [color={rgb, 255:red, 0; green, 0; blue, 0 }  ][fill={rgb, 255:red, 0; green, 0; blue, 0 }  ][line width=0.75]      (0, 0) circle [x radius= 3.35, y radius= 3.35]   ;
\draw    (32,100.5) -- (54.75,139.9) ;
\draw [shift={(54.75,139.9)}, rotate = 60] [color={rgb, 255:red, 0; green, 0; blue, 0 }  ][fill={rgb, 255:red, 0; green, 0; blue, 0 }  ][line width=0.75]      (0, 0) circle [x radius= 3.35, y radius= 3.35]   ;
\draw [shift={(32,100.5)}, rotate = 60] [color={rgb, 255:red, 0; green, 0; blue, 0 }  ][fill={rgb, 255:red, 0; green, 0; blue, 0 }  ][line width=0.75]      (0, 0) circle [x radius= 3.35, y radius= 3.35]   ;
\draw  [dash pattern={on 0.75pt off 750pt}]  (77.5,100.5) -- (100.25,139.9) ;
\draw [shift={(100.25,139.9)}, rotate = 60] [color={rgb, 255:red, 0; green, 0; blue, 0 }  ][fill={rgb, 255:red, 0; green, 0; blue, 0 }  ][line width=0.75]      (0, 0) circle [x radius= 3.35, y radius= 3.35]   ;
\draw [shift={(77.5,100.5)}, rotate = 60] [color={rgb, 255:red, 0; green, 0; blue, 0 }  ][fill={rgb, 255:red, 0; green, 0; blue, 0 }  ][line width=0.75]      (0, 0) circle [x radius= 3.35, y radius= 3.35]   ;

\draw    (250,100.5) -- (272.75,139.9) ;
\draw [shift={(272.75,139.9)}, rotate = 60] [color={rgb, 255:red, 0; green, 0; blue, 0 }  ][fill={rgb, 255:red, 0; green, 0; blue, 0 }  ][line width=0.75]      (0, 0) circle [x radius= 3.35, y radius= 3.35]   ;
\draw [shift={(250,100.5)}, rotate = 60] [color={rgb, 255:red, 0; green, 0; blue, 0 }  ][fill={rgb, 255:red, 0; green, 0; blue, 0 }  ][line width=0.75]      (0, 0) circle [x radius= 3.35, y radius= 3.35]   ;
\draw    (250,100.5) -- (227.25,139.9) ;
\draw [shift={(227.25,139.9)}, rotate = 120] [color={rgb, 255:red, 0; green, 0; blue, 0 }  ][fill={rgb, 255:red, 0; green, 0; blue, 0 }  ][line width=0.75]      (0, 0) circle [x radius= 3.35, y radius= 3.35]   ;
\draw [shift={(250,100.5)}, rotate = 120] [color={rgb, 255:red, 0; green, 0; blue, 0 }  ][fill={rgb, 255:red, 0; green, 0; blue, 0 }  ][line width=0.75]      (0, 0) circle [x radius= 3.35, y radius= 3.35]   ;
\draw    (227.25,139.9) -- (204.5,179.31) ;
\draw [shift={(204.5,179.31)}, rotate = 120] [color={rgb, 255:red, 0; green, 0; blue, 0 }  ][fill={rgb, 255:red, 0; green, 0; blue, 0 }  ][line width=0.75]      (0, 0) circle [x radius= 3.35, y radius= 3.35]   ;
\draw [shift={(227.25,139.9)}, rotate = 120] [color={rgb, 255:red, 0; green, 0; blue, 0 }  ][fill={rgb, 255:red, 0; green, 0; blue, 0 }  ][line width=0.75]      (0, 0) circle [x radius= 3.35, y radius= 3.35]   ;
\draw    (272.75,139.9) -- (295.5,179.31) ;
\draw [shift={(295.5,179.31)}, rotate = 60] [color={rgb, 255:red, 0; green, 0; blue, 0 }  ][fill={rgb, 255:red, 0; green, 0; blue, 0 }  ][line width=0.75]      (0, 0) circle [x radius= 3.35, y radius= 3.35]   ;
\draw [shift={(272.75,139.9)}, rotate = 60] [color={rgb, 255:red, 0; green, 0; blue, 0 }  ][fill={rgb, 255:red, 0; green, 0; blue, 0 }  ][line width=0.75]      (0, 0) circle [x radius= 3.35, y radius= 3.35]   ;
\draw    (250,61.1) -- (250,100.5) ;
\draw [shift={(250,100.5)}, rotate = 90] [color={rgb, 255:red, 0; green, 0; blue, 0 }  ][fill={rgb, 255:red, 0; green, 0; blue, 0 }  ][line width=0.75]      (0, 0) circle [x radius= 3.35, y radius= 3.35]   ;
\draw [shift={(250,61.1)}, rotate = 90] [color={rgb, 255:red, 0; green, 0; blue, 0 }  ][fill={rgb, 255:red, 0; green, 0; blue, 0 }  ][line width=0.75]      (0, 0) circle [x radius= 3.35, y radius= 3.35]   ;
\draw    (250,21.69) -- (250,61.1) ;
\draw [shift={(250,61.1)}, rotate = 90] [color={rgb, 255:red, 0; green, 0; blue, 0 }  ][fill={rgb, 255:red, 0; green, 0; blue, 0 }  ][line width=0.75]      (0, 0) circle [x radius= 3.35, y radius= 3.35]   ;
\draw [shift={(250,21.69)}, rotate = 90] [color={rgb, 255:red, 0; green, 0; blue, 0 }  ][fill={rgb, 255:red, 0; green, 0; blue, 0 }  ][line width=0.75]      (0, 0) circle [x radius= 3.35, y radius= 3.35]   ;
\end{tikzpicture}
\end{center}
\caption{Smallest pair of non-isomorphic generalized cospectral graphs with respect to the adjacency matrix.}\label{fig:smallestRcospectralpair}
\end{figure}

We recall that the $ij$-entry of $A^r$ is equal to the number of walks in $G$ of length $r\geq 0$ from vertex $i$ to vertex $j$. 
In particular, there is a walk of length zero from each vertex to itself because $A^0 = I$. 
It follows from basic properties of formal power series (see e.g. \cite[p. 40]{godsil1993algebraic}) that
\[
     \sum_{r \geq 0} A^r t^r = (I- tA)^{-1}. 
\]
Since the total number of walks in $G$ of length $r$ is equal to 
\[\tr(A^r J) = \1^T A^r \1,\] 
the generating function for all walks in $G$ is given by 
\[
    \sum_{r \geq 0}\tr(A^r J) t^r = \1^T(I- tA)^{-1} \1. 
\]
We say that two graphs are \emph{walk-equivalent} if their generating functions for all walks are equal. 
Note that for every real number $t$, we have  
\[
  (t-1)I - \bar{A} =  tI - (J -A) = (tI + A)(I-(tI+A)^{-1} J).
\]
Since \[I-(tI+A)^{-1} J = I-((tI+A)^{-1}\1 )\1^T\] and 
\[(t-1)I - \bar{A} =   ((-1)(-tI - A))(I-((tI+A)^{-1}\1 )\1^T),
\] 
we can use the identity $\det(I-\mathbf{u}\mathbf{v}^T) = 1 - \mathbf{v}^T\mathbf{u}$ to find that
\[
     \frac{\det((t-1)I-\comp{A})}{(-1)^{n}\det (-tI - A) } = 1 - \1^T(tI + A)^{-1} \1. 
\]
Consequently, we have that
\[
      \1^T(I - tA)^{-1} \1 = \frac{1}{t} \left( \frac{\det((-t^{-1}-1)I - \comp{A})}{(-1)^{n}\det( t^{-1}I- A)} - 1 \right). 
\]
Therefore, the generating function for all walks is determined by the characteristic polynomial of a graph and the characteristic polynomial of its complement.
Hence, a necessary condition for generalized cospectrality is walk-equivalence.
It follows from \cite[Corollary 3.2]{godsil2012controllable} that, for controllable graphs, this condition is also sufficient. 

\begin{theorem}[\cite{godsil2012controllable}]\label{lem:sufficient}
Two controllable graphs are walk-equivalent if and only if they are generalized cospectral. 
\end{theorem}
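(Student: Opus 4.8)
The plan is to prove the substantive direction, that walk-equivalent controllable graphs $G$ and $H$ are generalized cospectral; the converse is already contained in the discussion above, since the displayed identity expresses $\1^T(I-tA)^{-1}\1$ through $\det(tI-A)$ and $\det((t-1)I-\comp{A})$, and any two generalized cospectral graphs share both of these polynomials. By the Johnson--Newman theorem quoted above, it then suffices to show that two walk-equivalent controllable graphs are cospectral and have cospectral complements.

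The key step is to recover the characteristic polynomial of a controllable graph from its walk generating function. Set $w_r = \1^TA^r\1$, so $\sum_{r\ge 0} w_r t^r = \1^T(I-tA)^{-1}\1$, and decompose $A = \sum_i \lambda_i E_i$ into eigenvalues and orthogonal spectral idempotents; then $\sum_{r\ge 0} w_r t^r = \sum_i \frac{\1^T E_i \1}{1-\lambda_i t}$. Controllability of $G$ says exactly that $\1$ is a cyclic vector for $A$, equivalently that the minimal polynomial of $A$ relative to $\1$ has full degree $n$; for a symmetric matrix this forces $A$ to have $n$ distinct eigenvalues and $E_i\1\neq 0$ for every $i$. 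Hence in the partial-fraction expansion all poles $1/\lambda_i$ are distinct and all residues $\1^T E_i\1 = \|E_i\1\|^2$ are strictly positive, so no cancellation occurs and the reduced denominator of the walk generating function is precisely $\prod_i(1-\lambda_i t) = t^n\det((1/t)I - A)$. Therefore, if the controllable graphs $G$ and $H$ are walk-equivalent their generating functions coincide, the reduced denominators coincide, and $\det(tI-A_G)=\det(tI-A_H)$: they are cospectral.

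Next I would feed cospectrality back into the displayed identity
\[
  \1^T(I - tA)^{-1}\1 = \frac{1}{t}\left( \frac{\det((-t^{-1}-1)I-\comp{A})}{(-1)^n\det(t^{-1}I-A)} - 1 \right).
\]
For $G$ and $H$ the left-hand sides are equal (walk-equivalence) and the denominators $(-1)^n\det(t^{-1}I-A)$ on the right are equal (cospectrality), so the numerators $\det((-t^{-1}-1)I-\comp{A_G})$ and $\det((-t^{-1}-1)I-\comp{A_H})$ agree as functions of $t$; substituting $s=-t^{-1}-1$ gives $\det(sI-\comp{A_G})=\det(sI-\comp{A_H})$ identically. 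Thus $G$ and $H$ are cospectral with cospectral complements, and Johnson--Newman finishes the proof.

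The main obstacle is the no-cancellation claim in the second step: showing that, for a controllable graph, the reduced denominator of the walk generating function really is the reciprocal characteristic polynomial rather than a proper divisor of it. This is exactly where controllability is indispensable --- for a non-controllable graph the cyclic subspace generated by $\1$ is proper, the denominator degree drops, and the characteristic polynomial is genuinely not recoverable from the walks. A secondary, purely bookkeeping point is to keep track of normalizations --- monic versus reciprocal polynomials, the sign $(-1)^n$, and the substitution $t\mapsto t^{-1}$ --- when translating between the generating function and the two characteristic polynomials.
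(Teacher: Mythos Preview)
Your proposal is correct. The paper does not actually supply a proof of this theorem; it records the easy direction (generalized cospectral $\Rightarrow$ walk-equivalent) in the discussion preceding the statement and then cites Godsil's \cite{godsil2012controllable} for the converse. Your argument therefore fills in what the paper leaves as a reference, and it does so along the natural lines: use the spectral decomposition to see that, for a controllable graph, the rational function $\1^T(I-tA)^{-1}\1$ has exactly $n$ simple poles with strictly positive residues, so its reduced denominator recovers $\det(tI-A)$; then feed cospectrality into the displayed identity to recover $\det(tI-\comp{A})$ as well, and invoke Johnson--Newman.

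Two small points worth tightening. First, the ``reduced denominator'' is determined only up to scalar, and if $0$ is an eigenvalue the factor $(1-0\cdot t)$ contributes nothing; you should note explicitly that $n$ is read off from the value at $t=0$ (namely $\1^T\1=n$), so that together with the nonzero poles you recover the full monic polynomial $\det(tI-A)$. Second, in the final substitution $s=-t^{-1}-1$ you are matching two polynomials on a Zariski-dense set rather than literally at every $s$, which is of course enough. These are exactly the bookkeeping issues you flag yourself; none affects the validity of the argument.
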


\section{First-order logic with counting quantifiers}
Descriptive complexity is the subfield of mathematical logic that studies the formal relationship between logical complexity and algorithmic efficiency. 
There are efficient algorithms that determine whether two graphs satisfy exactly the same properties if we consider only properties that can be described in first-order logic using finitely many variables.
The first-order logic of graphs consists of strings of symbols built using  variables ($x,y,z,\dots$), the usual logical connectives for negation ($\lnot$) and  for disjunction ($\lor$),  the existential quantifier ($\exists$) , various types of parentheses used to avoid ambiguity, and the binary relation symbols  for equality ($=$) and  for adjacency ($E$).
The variables that occur in expressions formed using these symbols always range over the vertices of a graph. 
As a consequence, the quantifiers only apply to individual vertices and this is why the logic is called \emph{first-order}.

There are certain rules  within  the language of first-order logic regarding the formation of  interpretable expressions, also known as \emph{formulas}. 
For example, the expressions
\[
Exy, \qquad\qquad  \lnot\exists x \lnot Exx \qquad\qquad \textnormal{ and } \qquad \qquad \exists x\exists y\lnot(x = y \lor  Exy)
\]
are first-order formulas of the language of graphs. 
The way to interpret the formula $Exy$ in a given graph $G$ goes as follows.  
First we choose two vertices of $G$, say $u$ and $v$, and substitute them for the variables $x$ and $y$ to obtain the expression $Euv$. 
Next we verify if there is an edge in $G$ between $u$ and $v$. 
If there is indeed such an edge, we say that the formula $Exy$ is \emph{true} in $G$, or that $G$ \emph{satisfies} $Exy$, when we interpret the variable $x$ as vertex $u$ and the variable $y$ as vertex $v$. 
We denote this by writing $G,u,v \models Exy$ or, equivalently, by $G \models Euv$. 
If there is no such edge in $G$, then the formula is not true in the graph for that particular choice of assignment of vertices to variables.  
This means that the formula $Exy$ asserts the existence of an edge in the graph where we interpret it.  

It is customary to introduce other usual symbols, connectives and quantifiers as abbreviations. 
For example, the formula $\lnot(x = y \lor  Exy)$ can be rewritten as $(x\neq y \land \lnot Exy)$ and the formula $\lnot\exists x \lnot Exx$ as $\forall x Exx$.
The unquantified variables of a formula are called \emph{free variables}. 
For example, the two variables $x$ and $y$  that occur inside the formula $\phi$ defined by
\[
\phi  := \exists x\exists y(x\neq y \land \lnot Exy)
\] 
are within the scope of some quantifier. 
This implies that the number of free variables in $\phi$ is zero. 
A \emph{sentence} is a formula that does not contain any free  variable. 
Note that the sentence $\phi$ defined above is true in a graph $G$ if and only if there are (at least) two distinct non-adjacent vertices in $G$. 

Two measures of logical complexity for a formula $\phi$ are  the maximum number of free variables in any subformula of $\phi$ and the depth of nesting of the quantifiers in $\phi$. 
We say that a sentence $\phi$ \emph{distinguishes} a graph $G$ from a graph $H$ if  $\phi$ is true in one graph and not true in the other, i.e., if  $G\models\phi$ and $H\not\models\phi$ or viceversa. 
If $\phi$ distinguishes $G$ from any non-isomorphic graph $H$, then we say that $\phi$ \emph{defines} $G$ (up to isomorphism). 
Every finite graph $G$ is definable by a canonical first-order sentence $\phi_G$ (see e.g. \cite[Lemma 3.4]{libkin2004elements}). 
If the number of vertices in $G$ is $n$, then the number of distinct variables used in its defining sentence $\phi_G$ is  $n+1$.
Since there are efficient isomorphism tests for classes of graphs defined by sentences with low logical complexity, it is a relevant task to find short definitions for interesting classes. 

The language $L^k$ consists of the fragment of first-order logic where the formulas are restricted to use at most $k\geq 1$ distinct variables.  
We use $C^k$ to denote extension of $L^k$ with \emph{counting quantifiers}: for each non-negative integer $d$, there is a quantifier $\exists^{\geq d}$ whose semantics is defined so that $\exists^{\geq d}x \phi$ is true in a graph $G$ if there are at least $d$ distinct vertices of $G$ that can be substituted for $x$ to make $\phi$ true. 
We use the abbreviation $\exists^{d}\phi$ for the formula $\exists^{\geq d}\phi \land \lnot \exists^{\geq d+1}\phi$ that asserts the existence of exactly $d$ vertices satisfying $\phi$. 
For example, the sentence $\forall x \exists^{d} y  Exy$ of the language $C^2$ is true in a graph $G$ if and only if $G$ is regular of degree $d$. 
Consequently, any two regular graphs of different degree can be distinguished by a $C^2$-sentence.  

Two graphs $G$ and $H$ are \emph{elementary equivalent} with respect to a first-order language $L$ (or \emph{$L$-equivalent}), just in case $G \models \phi$ if and only if $H \models \phi$ for any $L$-sentence $\phi$. 
In other words, $L$-equivalent graphs are precisely those graphs that cannot be distinguished by any property definable by a sentence of the language $L$. 
There is an algorithm named after  Weisfeiler and Leman that, for every $k\geq 1$, determines in polynomial time  whether two graphs are $C^k$-equivalent (see e.g. \cite{immerman1990describing}). 
It is well-known that if two graphs are $C^3$-equivalent, then they are generalized cospectral (see e.g. \cite{alzaga2010spectra, DSZ, furer2010power, rattan2023weisfeiler}). 
The converse is false; the two graphs of Figure \ref{fig:smallestRcospectralpair} are distinguishable by the sentence $\exists x \forall y \lnot Exy$, which asserts the existence of an isolated vertex. 
The use of counting is essential since, for every $k$, there is a pair of non-isomorphic $L^k$-equivalent graphs which are not generalized cospectral  (see \cite[Proposition 4]{DSZ}).  
Also, the use of three variables is necessary because if we let $G$ be the disjoint union of two triangles and let $H$ be a cycle of length 6, then it can be shown that $G$ and $H$ are $C^2$-equivalent, but $G$ and $H$ are not cospectral. 
 
\section{Isomorphism approximations}
We now proceed to describe the relation of $C^2$-equivalence to other combinatorial and geometric approximations of graph isomorphism. 
Recall that the degree of a vertex $v$ in a graph $G$ is, by definition, the number of vertices in $G$ which are adjacent to $v$; it is denoted by $d(v)$. 
The \emph{degree sequence} of $G$ is the integer sequence $d(G)$ defined by $d(G)=\{d(v):v\in V(G)\}$. 
If we write $N(v)$ for the set of all those vertices in $G$ that are adjacent to $v$, then the sequence $\{d_r(v):r\geq 0\}$ is defined inductively by $d_0(v)=d(v)$ and $d_{r+1}(v)=\{d_r(u):u\in N(v)\}$ for every $r$.
The \emph{iterated degree sequence} of $G$ is the sequence $D(G)=\{d_r(G):r\geq 0\}$ defined inductively by $d_0(G)=d(G)$ and $d_{r+1}(G)=\{d_r(v):v\in V(G)\}$ for every $r$.
The process of finding the iterated degree sequence of a graph has several widely adopted names; it is known as canonical labelling, color refinement, naive vertex classification or 1-dimensional Weisfeiler-Leman algorithm.

Indistinguishability by iterated degree sequences is a strong isomorphism invariant in the sense that it works for almost all graphs. 
Indeed, a classical result of Babai, Erd\H{o}s and Selkow \cite{babai1980random} says that if $G$ is a random graph on $n$ vertices with edge probability $1/2$, then every graph with the same iterated degree sequence of $G$ is isomorphic to $G$ asymptotically almost surely. 
However, indistinguishability by iterated degree sequences is weak in the sense that two regular graphs with the same number of vertices and the same degree necessarily have the same iterated degree sequence. 
For example, if $G$ is the disjoint union of two triangles and $H$ is the cycle of length 6, then both $G$ and $H$ have $6$ vertices and degree $2$, and hence their iterated degree sequence is $(2,2,2,2,2,2)$.   
It is well-known that a necessary and sufficient condition for indistinguishability by iterated degree sequences is $C^2$-equivalence (see \cite[Theorem 4.8.1]{immerman1990describing}).

\begin{theorem}[\cite{immerman1990describing}]
	Two graphs have the same iterated degree sequence if and only if they are $C^2$-equivalent. 
\end{theorem}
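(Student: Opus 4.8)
The plan is to route both implications through color refinement (the $1$-dimensional Weisfeiler--Leman algorithm). Run color refinement on the disjoint union $G\sqcup H$, starting from the degree coloring; after finitely many rounds it stabilizes to a coloring $c$, and the stable color of a vertex $v$ encodes precisely the sequence $(d_r(v))_{r\ge 0}$ — an easy induction shows that $d_r(v)$ already determines $d_s(v)$ for every $s\le r$ by a graph-independent rule (e.g.\ $d_0(v)=|d_1(v)|$, the cardinality of the multiset $d_1(v)$). Hence $G$ and $H$ have the same iterated degree sequence if and only if $c$ assigns the same multiset of colors to $V(G)$ and to $V(H)$, and it remains to relate the latter condition to $C^2$-equivalence.

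For ``$C^2$-equivalent $\Rightarrow$ same iterated degree sequence'' I would show by induction on $r$ that every level-$r$ color class is $C^2$-definable, uniformly over all graphs: there is a $C^2$-formula $\varphi_{r,\gamma}(x)$ with $G\models\varphi_{r,\gamma}(v)$ if and only if $d_r(v)=\gamma$. Take $\varphi_{0,k}(x):=\exists^{k}y\,Exy$, and for the inductive step, when $\gamma$ is the multiset $\{(\gamma_1,m_1),\dots,(\gamma_t,m_t)\}$ of level-$r$ colors (with multiplicities) occurring among the neighbors, put
\[
\varphi_{r+1,\gamma}(x)\ :=\ \bigwedge_{i=1}^{t}\exists^{m_i}y\,\bigl(Exy\wedge\varphi_{r,\gamma_i}(y)\bigr)\ \wedge\ \neg\exists y\,\Bigl(Exy\wedge\bigwedge_{i=1}^{t}\neg\varphi_{r,\gamma_i}(y)\Bigr),
\]
where only $x$ and $y$ occur, each $\varphi_{r,\gamma_i}(y)$ re-using $x$ as its bound variable, so the formula stays inside $C^2$. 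Then the multiplicity of any level-$r$ color $\gamma$ is recorded by the $C^2$-sentence $\exists^{m}x\,\varphi_{r,\gamma}(x)$, so $C^2$-equivalent graphs agree on the multiset $d_r(G)$ for every $r$.

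For the converse I would prove, by induction on formulas, that for every $C^2$-formula $\psi$ with free variables among $x,y$ and all vertices $\bar u$ in $G$, $\bar w$ in $H$ with $c(u_i)=c(w_i)$ and the same equality and adjacency pattern, $G\models\psi(\bar u)$ if and only if $H\models\psi(\bar w)$; applied to sentences this yields $C^2$-equivalence. The only case needing argument is a counting quantifier $\exists^{\ge d}y\,\theta$: after fixing the value $u_1$ of $x$ (if $x$ occurs free), one needs a bijection $\beta\colon V(G)\to V(H)$ that preserves $c$, sends $u_1$ to $w_1$, and maps $N_G(u_1)$ onto $N_H(w_1)$; the induction hypothesis then makes $\beta$ restrict to a bijection between the sets of witnesses, so the two counts agree. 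Such a $\beta$ is built one color class of $c$ at a time: by stability of $c$ and $c(u_1)=c(w_1)$, the vertices $u_1$ and $w_1$ have the same number of neighbors in each color class, and by hypothesis each color class has the same size on the two sides, so neighbors match neighbors and non-neighbors match non-neighbors within every class. (Alternatively one may invoke the bijective two-pebble game characterization of $C^2$-equivalence, with these same bijections furnishing Duplicator's strategy.)

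I expect the bijection/counting step to be the crux. Two points need care: that two variables genuinely suffice (hence the deliberate reuse of $x,y$ in $\varphi_{r+1,\gamma}$), and — more fundamentally — that it is exactly the \emph{stability} of $c$, namely that a vertex's color determines how many neighbors of each color it has, together with equality of color-class sizes, i.e.\ precisely the ``same iterated degree sequence'' hypothesis, that guarantees the bijection exists. In game terms, this is why only the single already-placed pebble constrains the bijection and why two pebbles suffice: the ``common neighbor of two pebbled vertices'' phenomenon that separates, say, $2K_3$ from $C_6$ requires a third pebble and never comes into play.
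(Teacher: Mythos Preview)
The paper does not actually prove this theorem: it is quoted as a known result with a reference to Immerman and Lander, so there is no ``paper's own proof'' to compare against. Your proposal is a correct outline of the standard argument---defining $C^2$-formulas for each refinement round to get one direction, and a formula induction (equivalently, the bijective two-pebble game) for the other---and it is essentially the proof one finds in the cited source and its later expositions. The two points you flag as delicate (variable reuse to stay in $C^2$, and the role of stability plus equal class sizes in building the bijection) are exactly the places where care is needed, and your treatment of both is sound.
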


It turns out that the combinatorial notion of having the same iterated degree sequence is equivalent to a  the geometric notion called fractional isomorphism (see \cite[Theorem 2.2]{ramana1994fractional}). 
A real matrix $S$ is called \emph{doubly stochastic} if all its entries are non-negative and every row and every column sums to 1.  
The Birkhoff-von Neumann theorem says that the set of all $n\times n$ doubly stochastic matrices is a compact and convex set whose extreme points are the permutation matrices (see e.g. \cite[Theorem 8.2.2]{horn2012matrix}). 
Recall that two graphs $G$ and $H$ with adjacency matrices $A$ and $B$ are isomorphic if and only if there exists a permutation matrix $P$ such that $PAP^T = B$. 
If we multiply both sides by $P$, then we get the equivalent condition $PA=BP$. 
We say that the graphs $G$ and $H$ are \emph{fractionally isomorphic} if there exists a doubly stochastic matrix $S$ such that $SA = BS$.   

\begin{theorem}[\cite{ramana1994fractional}]\label{thm:degree}
	Two graphs are fractionally isomorphic if and only if they have the same iterated degree sequence. 
\end{theorem}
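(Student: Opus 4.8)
The plan is to prove the two implications separately. The implication ``same iterated degree sequence $\Rightarrow$ fractionally isomorphic'' is elementary and explicit; the reverse implication is where the real work lies, and the crucial resource there is the non-negativity of the intertwining matrix.

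\textbf{Constructing a fractional isomorphism.} Recall that having the same iterated degree sequence means exactly that color refinement yields stable colorings of $G$ and $H$ that correspond: there are partitions $V_1,\dots,V_p$ of $V(G)$ and $W_1,\dots,W_p$ of $V(H)$ with $|V_i|=|W_i|$ such that every vertex of $V_i$ has exactly $b_{ij}$ neighbors in $V_j$ and every vertex of $W_i$ has exactly $b_{ij}$ neighbors in $W_j$, with the \emph{same} numbers $b_{ij}$. Given this data, I would let $S$ be the block matrix whose $(W_i,V_i)$ block equals $\tfrac{1}{|V_i|}J$ and which is $0$ elsewhere; this $S$ is doubly stochastic. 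A one-line entrywise check gives $(SA)_{v,u}=b_{ki}/|V_i|$ and $(BS)_{v,u}=b_{ik}/|V_k|$ for $v\in W_i$ and $u\in V_k$, and these are equal because double-counting the edges between the $i$-th and $k$-th classes gives $b_{ki}|V_k|=b_{ik}|V_i|$. Hence $SA=BS$, so $G$ and $H$ are fractionally isomorphic.

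\textbf{Recovering equal iterated degrees.} For the other direction I would start from a doubly stochastic $S$ with $SA=BS$, so also $S\1=\1$, $\1^TS=\1^T$ and $AS^T=S^TB$. The goal is to show, by induction on the round $r$ of color refinement, that $S$ respects the colorings: each round-$r$ class $C$ of $G$ has a round-$r$ class $C'$ of $H$ with $|C|=|C'|$ and the same color, the assignment $C\mapsto C'$ is a bijection, and $S_{v,u}>0$ with $u\in C$ forces $v\in C'$. In the inductive step, the support condition together with the fact that all row and column sums of $S$ equal $1$ shows that $S$ sends the indicator vector of a round-$r$ class $D$ to the indicator vector of $D'$, hence that $S$ intertwines the neighbor-count vectors $\eta_D(u)=|N_G(u)\cap D|$ and $\eta_{D'}$, i.e. $S\eta_D=\eta_{D'}$ and $S^T\eta_{D'}=\eta_D$. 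Restricting $S$ to the block indexed by $C'\times C$ gives a square doubly stochastic matrix $\widetilde S$ with $\widetilde S^T\widetilde S\,\eta_D=\eta_D$ on $C$. Since $\widetilde S^T\widetilde S$ is symmetric doubly stochastic, the maximum principle for its harmonic vectors forces $\eta_D$ to be constant on each connected component (``piece'') of the bipartite graph on $C\sqcup C'$ given by the support of $\widetilde S$; on each piece the common value of $\eta_D$ coincides with that of $\eta_{D'}$, and the restriction of $S$ to a piece is again doubly stochastic, so a piece meets $C$ and $C'$ in equally many vertices. Consequently the round-$(r{+}1)$ refinement of $C$ splits it into unions of these pieces grouped by the value vector $(\eta_D)_D$, matching the refinement of $C'$ with equal class sizes and equal colors, and the support condition is preserved because $S_{v,u}>0$ keeps $u$ and $v$ in a common piece. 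At stabilization the two stable colorings, their class sizes and their quotient matrices all agree, which is precisely the statement that $G$ and $H$ have the same iterated degree sequence.

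\textbf{The main obstacle.} The hard part is pinning down where non-negativity is used, since $SA=BS$ alone only yields $\1^Tp(A)\1=\1^Tp(B)\1$ for every polynomial $p$, i.e. walk-equivalence, which is strictly weaker than fractional isomorphism (the two graphs of Figure~\ref{fig:smallestRcospectralpair} are walk-equivalent, being generalized cospectral, yet have different iterated degree sequences). The extra force comes from $S\ge 0$: it is what makes $\widetilde S^T\widetilde S$ a symmetric doubly stochastic matrix to which the maximum principle applies, forcing the local degree counts to be locally constant and the support ``pieces'' to be balanced between the two graphs. Getting this bookkeeping to propagate cleanly through the refinement rounds — in particular, checking that a single decomposition into pieces serves all classes $D$ simultaneously, so that the round-$(r{+}1)$ colorings line up — is the technical core of the argument.
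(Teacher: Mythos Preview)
The paper does not give its own proof of this theorem: it is quoted as a known result from \cite{ramana1994fractional} and used as a black box, so there is nothing in the paper to compare your argument against. Your proposal is a correct outline of essentially the standard proof in the cited reference---constructing the block-constant doubly stochastic matrix from the stable equitable partition for one direction, and for the other direction showing inductively that any intertwining doubly stochastic $S$ must be block-structured with respect to each round of colour refinement, using that eigenvectors of a symmetric doubly stochastic matrix for eigenvalue $1$ are constant on components of its support. One small bookkeeping point: your inductive hypothesis should record the support condition in both directions (that $S_{v,u}>0$ forces $u\in C\Leftrightarrow v\in C'$), since you implicitly use both when asserting that the $(C',C)$ block $\widetilde S$ is itself doubly stochastic; this is harmless because the symmetric relation $AS^T=S^TB$ gives the reverse implication by the same argument.
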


We turn now to investigate the connection between $C^2$-equivalence (or, equivalently, indistinguishability by iterated degree sequences, or fractional isomorphism) and the notion of walk-equivalence. 

\begin{lemma} \label{lem:c2}
   If two graphs are $C^2$-equivalent, then they are walk-equivalent.
\end{lemma}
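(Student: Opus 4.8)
The plan is to route through the two equivalences already recorded in the excerpt and then exploit the defining property of doubly stochastic matrices. First I would invoke the characterization of $C^2$-equivalence by the iterated degree sequence (the cited theorem of Immerman) to replace the hypothesis by the statement that $G$ and $H$ have the same iterated degree sequence, and then apply Theorem~\ref{thm:degree} to produce a doubly stochastic matrix $S$ with $SA = BS$, where $A$ and $B$ are the adjacency matrices of $G$ and $H$.

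The key observation is that a doubly stochastic matrix fixes the all-ones vector on both sides: $S\1 = \1$ and $\1^T S = \1^T$. Combined with $SA = BS$, an easy induction on $r$ gives $SA^r = B^r S$ for every $r \geq 0$; the inductive step is $SA^{r+1} = (SA^r)A = (B^rS)A = B^r(SA) = B^{r+1}S$. Then for each $r$,
\[
  \1^T A^r \1 = \1^T S A^r \1 = \1^T B^r S \1 = \1^T B^r \1,
\]
so $G$ and $H$ have the same number of walks of every length, since $\tr(A^r J) = \1^T A^r \1$. Because the walk generating function of a graph is $\1^T(I-tA)^{-1}\1 = \sum_{r\geq 0}\1^T A^r \1\, t^r$, equality of all these coefficients yields equality of the generating functions, which is precisely walk-equivalence.

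I do not expect a serious obstacle: the entire content is the alignment of the cited results with the remark that $\1$ is a (left and right) fixed vector of every doubly stochastic matrix. The only point deserving care is to make sure the relation supplied by fractional isomorphism is $SA = BS$ in the orientation defined in the excerpt, so that the induction and the final computation are consistent. An alternative, slightly more laborious, route would be to argue directly and combinatorially that the number of walks of length $r$ is a function of the iterated degree sequence, but the matrix computation above is cleaner and self-contained given Theorem~\ref{thm:degree}.
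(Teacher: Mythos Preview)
Your argument is correct, but it follows a genuinely different route from the paper's own proof. The paper does not go through Theorems~2 and~\ref{thm:degree} at all; instead it builds, by induction on $r$, explicit $C^2$-formulas $\psi^q_r(x)$ expressing ``there are exactly $q$ walks of length $r$ starting at $x$'', and from these a $C^2$-sentence $\phi^q_r$ expressing ``there are exactly $q$ walks of length $r$ in the graph''. Since $C^2$-equivalent graphs agree on every $\phi^q_r$, they have the same walk counts. Your approach is shorter and cleaner, and it makes the lemma an immediate corollary of the two cited equivalences together with the observation that $\1$ is a two-sided fixed vector of any doubly stochastic matrix. What the paper's approach buys is that the per-vertex formulas $\psi^q_r(x)$ are reused in the very next result (Lemma~\ref{lem:walk}) to argue that $C^2$-equivalence forces the walk matrices themselves to agree up to a row permutation; your doubly-stochastic intertwiner $S$ gives only $SW_G = W_H$, which would not suffice there without further work. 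So your proof is a clean drop-in for this lemma, but the paper's more laborious construction is doing double duty.
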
   

\begin{proof} 
We shall write a formula $\psi^q_{r}(x)$ of first-order logic with counting such that if $G$ is a graph and $u$ is a vertex of $G$, then $G\models \psi^q_{r}(u)$ if and only if there are $q\geq 0$ walks  in $G$ of length $r \geq 0$ starting at $u$.
We proceed to define $\psi^q_{r}(x)$ by induction on $r$. 
If $r = 0$, then we define 
\[
\psi^0_{0}(x) := \bot, \qquad \psi^1_{0}(x) := \top
 \qquad \textnormal{and} \qquad 
\psi^q_{0}(x) := \bot \quad \textnormal{for}\ q>1,
\]
where $\bot$ represents any false formula, e.g. $\forall y(Exy \land \lnot Exy)$, and $\top$ represents any tautology, e.g. $\forall y(Exy \lor \lnot Exy)$. 
Now if $r = 1$, then we define
\[
\psi^0_{1}(x) := \forall y\lnot Exy \qquad \textnormal{and} \qquad 
\psi^q_{1}(x) := \exists^{r} y\  Exy \quad \textnormal{for}\ q>0.
\]
For $r > 1$, we define
\[
\psi^0_{r+1}(x) := \forall y ( Exy \to \psi^0_{r}(y) ),
\]
and if $r >0$, then we define
\[
\psi^q_{r+1}(x) := \bigvee_{(q_1^{a_1},\dots,q_d^{a_d})\in \Pi_q} [ ( \bigwedge_{i = 1}^d \exists^{a_i}y\ \psi_{r}^{q_i}(y)) \land \exists^{\geq a}y\ Exy],
\]
where $\Pi_q$ denote the set of all integer partitions of $q$ (i.e., $q_i\geq 0$, $a_i\geq 1$ and $q= \sum_{i=1}^d a_iq_i$), and $a = \sum_{i=1}^d a_i$. 
We observe that in all these definitions we do not use more than two distinct variables.

Having defined the formula $\psi^r_{r}(x)$, using the same notation we define the sentence $\phi_{r}^{q}$ as follows:
\[
    \phi_{r}^{q} := \bigvee_{(q_1^{a_1},\dots,q_d^{a_d})\in \Pi_q}\  \bigwedge_{i = 1}^d \exists^{a_i}x\ \psi_{r}^{q_i}(x).
\]
By definition, we have that $G\models  \phi_{r}^{q}$ if and only if there are $r$ walks in $G$ of length $r$. 

Finally, suppose that $G$ and $H$ are two graphs which are not walk-equivalent. 
This implies that $G$ and $H$ have a different number of walks of length $r$ for some $r\geq 0$. 
If we write $q$ for the number of walks in $G$ of length $r$, then we have that $G\models  \phi_{r}^{q}$ and $H \not\models  \phi_{r}^{q}$. 
Since $\phi_r^q$ is a sentence of the counting logic $C^2$, it follows that $G$ and  $H$ are not $C^2$-equivalent. 
\end{proof}

A necessary condition for generalized cospectrality of two graphs $G$ and $H$ is that their walk matrices satisfy $W^T_{G} W_{G} = W^T_{H} W_{H}$ (see e.g. \cite[Lemma 3]{van2009developments}). 
Our next result implies that this condition on the walk matrices is also necessary for $C^2$-equivalence. 

\begin{lemma} \label{lem:walk}
    If the graphs $G$ and $H$ are $C^2$-equivalent, then there exists a permutation matrix $P$ such that $PW_G = W_H$.
\end{lemma}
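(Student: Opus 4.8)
The plan is to extract the permutation matrix from the combinatorial content of $C^2$-equivalence, namely from the fact (Theorem~\ref{thm:degree}) that $C^2$-equivalent graphs are fractionally isomorphic. So suppose $G$ and $H$ have adjacency matrices $A$ and $B$, and let $S$ be a doubly stochastic matrix with $SA = BS$. I first observe that since $S$ is doubly stochastic, $S\1 = \1$ and $S^T\1 = \1$, hence $\1^T S = \1^T$. From $SA = BS$ one gets by induction $SA^r = B^r S$ for all $r \ge 0$: indeed $SA^{r+1} = (SA^r)A = (B^r S)A = B^r(SA) = B^r(BS) = B^{r+1}S$. Applying this to $\1$ and using $S\1 = \1$ gives $S A^r \1 = B^r S \1 = B^r \1$, i.e.\ $S$ maps the $r$-th column of $W_G$ to the $r$-th column of $W_H$. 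Collecting the columns, this says exactly $S W_G = W_H$.

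The remaining and main point is to upgrade the doubly stochastic matrix $S$ to an honest permutation matrix. Here I would use that $C^2$-equivalence, via the color-refinement / iterated-degree-sequence description (Theorem of \cite{immerman1990describing}), partitions $V(G)$ and $V(H)$ into color classes, and fractional isomorphism forces $S$ to be block-structured: $S$ is zero outside the blocks indexed by matching color classes, and within each pair of matching classes (which have equal size, say $m$) the corresponding submatrix is $m \times m$ doubly stochastic. By the Birkhoff--von Neumann theorem each such block is a convex combination of permutation matrices, so $S$ itself is a convex combination $S = \sum_j \lambda_j P_j$ of permutation matrices $P_j$, each of which respects the color classes. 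The key claim is that every such $P_j$ already satisfies $P_j W_G = W_H$. To see this, note $S W_G = W_H$ reads $\sum_j \lambda_j (P_j W_G) = W_H$ with $\sum_j \lambda_j = 1$, $\lambda_j > 0$; so it suffices to show that all the vectors $P_j W_G$ coincide, equivalently that $P_j W_G$ does not depend on $j$. This follows because the row of $W_G$ indexed by a vertex $v$ — the sequence of walk counts of each length from $v$ — is the same for all vertices in a fixed color class of $G$ (walk counts of each length are determined by the iterated degree sequence, cf.\ the formulas $\psi_r^q$ of Lemma~\ref{lem:c2}), and a color-respecting permutation only permutes rows within color classes; permuting equal rows among themselves does nothing. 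Hence $P_j W_G = P_1 W_G$ for all $j$, and therefore $P_1 W_G = S W_G = W_H$, so $P := P_1$ is the desired permutation matrix.

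I expect the main obstacle to be making precise and airtight the two structural facts I am invoking: (i) that a doubly stochastic $S$ intertwining $A$ and $B$ must be block-diagonal with respect to the color-refinement partition with square doubly stochastic blocks, and (ii) that the rows of the walk matrix are constant on color classes. For (i) one can argue directly: color refinement is exactly the coarsest equitable partition refining the degree partition, and an easy induction (in the spirit of Theorem~\ref{thm:degree}) shows $S$ must be supported on the corresponding block structure — or one can simply cite \cite{ramana1994fractional} where this block structure is the content of the proof of Theorem~\ref{thm:degree}. For (ii), the cleanest route is to invoke Lemma~\ref{lem:c2}'s formulas: the number of walks of length $r$ starting at $v$ is definable by the $C^2$-formula $\psi_r^q(x)$, hence is an isomorphism-invariant refining only to the color of $v$; two vertices of the same color satisfy the same $\psi_r^q$ for every $r$, so their rows in $W_G$ agree. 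With these two facts in hand the argument above closes.

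A shorter alternative, avoiding color classes entirely: once $S W_G = W_H$ and $S$ is doubly stochastic, note that $W_H^T W_H = W_G^T S^T S W_G$; but in fact one can also show $W_H^T W_H = W_G^T W_G$ holds from $C^2$-equivalence by a direct counting argument, and combine this with $S W_G = W_H$ and the Birkhoff decomposition to force each $P_j$ to work. However I think the color-class argument is the most transparent and is the one I would write up.
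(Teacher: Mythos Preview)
Your argument is correct, modulo the two structural facts you flag, both of which can indeed be established (fact~(i) does require a short but genuine argument: from $SA=BS$ and symmetry one also has $S^TB=AS^T$, and then an induction using that a doubly stochastic matrix satisfying $Sx=y$ and $S^Ty=x$ must respect the level sets of $x$ and $y$ yields the block structure). However, the route is considerably longer than the paper's. The paper dispenses with fractional isomorphism, block structure, and Birkhoff--von~Neumann entirely: it simply observes that the row of $W_G$ indexed by $v$ is the tuple $((A^r\1)_v)_{r=0}^{n-1}$, and that the $C^2$-formula $\bigwedge_{r}\psi^{q_r}_r(x)$ from Lemma~\ref{lem:c2} pins down this tuple. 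Since $C^2$-equivalence forces $G$ and $H$ to have the same number of vertices realising each $C^2$-formula in one free variable, the multisets of rows of $W_G$ and $W_H$ coincide, and any bijection matching equal rows furnishes the permutation $P$. In effect, your fact~(ii) \emph{is} the whole proof once one remembers that $C^2$-equivalence already equalises the counts of each $C^2$-type; the detour through $S$ and its Birkhoff decomposition is correct but buys nothing beyond the intermediate identity $SW_G=W_H$, which is not needed for the lemma.
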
   

\begin{proof} 
We shall use the $C^2$-formulas $\psi^{q}_{r}(x)$ already defined in the proof of Lemma \ref{lem:c2}. 
Recall that $\psi^{q}_{r}(x)$ asserts the existence of exactly $q$ walks of length $r$ starting at vertex $x$.
Since $G$ and $H$ are $C^2$-equivalent, 
there is a vertex $v$ of $G$ such that $G\models \psi^{q}_{r}(v)$ if and only if there is a vertex $v'$ of $H$ such that $H \models \psi^{q}_{r}(v')$. 
Hence the mapping $v \mapsto v'$ is a bijection between the sets $\{v \in V(G): G\models\psi^{q}_{r}(v)\}$ and $\{v' \in V(H): H \models\psi^{q}_{r}(v')\}$ for each $q\geq 0$ and $r \geq 1$. 
Since the rows of the walk matrix of a graph are indexed by the vertices of the graph, it follows that  the above bijection determines is a permutation matrix $P$ such that $PW_{G} = W_{H}$. 
\end{proof}

It follows from \cite[Lemma 6.1]{godsil2012controllable} that if two graphs $G$ and $H$ with adjacency matrices $A$ and $B$ are generalized cospectral and controllable, then the matrix $Q = W_{H}W_{G}^{-1}$ satisfies $QAQ^T= B$ and $Q\1 = \1$.
We shall use this remark to prove our next result. 

 \begin{theorem} \label{thm:iso}
     Two controllable graphs are isomorphic if and only if they are $C^2$-equivalent.
 \end{theorem}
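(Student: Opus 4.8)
The plan is to combine the two lemmas above with Theorem~\ref{lem:sufficient} and the remark recorded just before the statement. One direction is free: isomorphic graphs satisfy exactly the same sentences of any logic built from the adjacency and equality relations, so in particular they are $C^2$-equivalent. For the converse I would argue as follows.

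Suppose $G$ and $H$ are controllable graphs with adjacency matrices $A$ and $B$, and that they are $C^2$-equivalent. First, Lemma~\ref{lem:c2} gives that $G$ and $H$ are walk-equivalent, and then Theorem~\ref{lem:sufficient}, applicable because both graphs are controllable, upgrades this to generalized cospectrality. At this point I would invoke the remark from \cite[Lemma 6.1]{godsil2012controllable}: controllability makes $W_G$ invertible, and for generalized cospectral controllable graphs the matrix $Q := W_H W_G^{-1}$ satisfies $QAQ^T = B$ and $Q\1 = \1$. So $G$ and $H$ would be isomorphic as soon as we know that $Q$ is a permutation matrix. This is exactly where I would use $C^2$-equivalence a second time, now through Lemma~\ref{lem:walk}: it yields a permutation matrix $P$ with $PW_G = W_H$, whence $Q = W_H W_G^{-1} = P W_G W_G^{-1} = P$. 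Therefore $PAP^T = B$, i.e. $G \cong H$.

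The one genuinely delicate point — and the reason both lemmas are needed — is the identification of $Q$ with a permutation matrix. Generalized cospectrality alone only tells us that $W_G^T W_G = W_H^T W_H$, so that $Q$ is an orthogonal matrix fixing $\1$; such matrices are in general not permutations, and indeed generalized cospectrality by itself does not force isomorphism. Lemma~\ref{lem:walk} is precisely the ingredient that pins $Q$ down, by producing a \emph{permutation} matrix intertwining the walk matrices; since the walk matrix of a controllable graph is invertible, that permutation must coincide with $Q$. I expect the remainder of the argument to be routine bookkeeping once Lemmas~\ref{lem:c2} and~\ref{lem:walk} are in hand.
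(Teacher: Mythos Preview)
Your proposal is correct and matches the paper's proof essentially step for step: Lemma~\ref{lem:c2} followed by Theorem~\ref{lem:sufficient} to obtain generalized cospectrality, then Godsil's result to produce $Q = W_H W_G^{-1}$ with $QAQ^T = B$, and finally Lemma~\ref{lem:walk} to identify $Q$ with a permutation matrix via invertibility of $W_G$. Your closing paragraph on why both lemmas are needed is a helpful gloss, but the argument itself is identical to the paper's.
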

 
\begin{proof} 
We prove that $C^2$-equivalent controllable graphs are isomorphic. 
The converse is trivially true; isomorphic graphs are $C^2$-equivalent, irrespectively if they are controllable or not. 
Consider two controllable graphs $G$ and $H$ with adjacency matrices $A$ and $B$, respectively. 
If $G$ and $H$ are  $C^2$-equivalent, then it follows from Lemma \ref{lem:c2} that  $G$ and $H$ are walk-equivalent.
Thus, from Theorem \ref{lem:sufficient}, we know that $G$ and $H$ are generalized cospectral. 
From this, in turn, we infer that the matrix $Q = W_{H}W_{G}^{-1}$ satisfies $QAQ^T= B$ and $Q\1 = \1$. 
Now Lemma \ref{lem:walk} implies that there is a permutation matrix $P$ such that $PW_{G} =  W_{H}$. 
It follows that $Q  = W_{H}W_{G}^{-1} = PW_{G}W_{G}^{-1} = P$,  and hence that $PAP^T = B$. 
Consequently  $G$ and $H$ are isomorphic, and the proof of the theorem is complete. 
\end{proof}

We see therefore that there is a four-fold way to approach the same concept. 

\begin{corollary}
If the graphs $G$ and $H$ are controllable, then the following four conditions are equivalent.
\begin{enumerate}
\item $G$ and $H$ have the same iterated degree sequence.
\item $G$ and $H$ are fractionally isomorphic.
\item $G$ and $H$ are $C^2$-equivalent. 
\item $G$ and $H$ are isomorphic.
\end{enumerate}
\end{corollary}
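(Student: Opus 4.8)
The plan is to assemble the corollary from the results already established, observing that three of the four conditions are equivalent for \emph{arbitrary} graphs and that controllability is needed only to bring isomorphism into the circle. First I would record that the equivalence of conditions (1) and (3) is precisely Immerman's theorem quoted above, that two graphs have the same iterated degree sequence if and only if they are $C^2$-equivalent, and that this holds with no hypothesis whatsoever on $G$ and $H$. Next, the equivalence of (1) and (2) is Theorem~\ref{thm:degree}, that two graphs are fractionally isomorphic if and only if they have the same iterated degree sequence, again requiring no controllability assumption. Hence conditions (1), (2) and (3) are pairwise equivalent for every pair of graphs.

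The only place where the hypothesis enters is in closing the loop with condition (4). For this I would invoke Theorem~\ref{thm:iso}: two controllable graphs are isomorphic if and only if they are $C^2$-equivalent. This gives (3) $\Leftrightarrow$ (4) under the standing assumption that $G$ and $H$ are controllable, and combining it with the chain (1) $\Leftrightarrow$ (2) $\Leftrightarrow$ (3) yields the equivalence of all four statements. For completeness one can also note the elementary implications (4) $\Rightarrow$ (3), (4) $\Rightarrow$ (1) and (4) $\Rightarrow$ (2) directly, since isomorphic graphs are $C^2$-equivalent and have the same iterated degree sequence; these serve as a sanity check that the cycle of implications is genuinely closed.

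I do not expect a real obstacle here, since the corollary is a synthesis of earlier statements rather than a new argument. The one point that warrants care is bookkeeping: making clear which equivalences are unconditional (those among (1), (2), (3)) and isolating the single step where controllability is essential, namely the passage through Theorem~\ref{thm:iso}, which itself rests on Theorem~\ref{lem:sufficient} together with Lemmas~\ref{lem:c2} and~\ref{lem:walk} and Godsil's observation that $Q = W_H W_G^{-1}$ is a permutation matrix. Stating the proof as this short chain of citations, rather than reproving anything, keeps the presentation clean and makes the role of controllability transparent.
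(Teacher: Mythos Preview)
Your proposal is correct and matches the paper's approach: the paper offers no explicit proof of the corollary, presenting it immediately after Theorem~\ref{thm:iso} as a direct synthesis of that theorem with the quoted results of Immerman and of Ramana--Scheinerman--Ullman (Theorem~\ref{thm:degree}), exactly as you outline. One minor wording issue: Godsil's observation is that $Q = W_H W_G^{-1}$ satisfies $QAQ^T=B$ and $Q\1=\1$, not that $Q$ is a permutation matrix; the latter is the conclusion drawn by combining this with Lemma~\ref{lem:walk}.
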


We  now describe an algorithm that can decide in polynomial time whether two controllable graphs are isomorphic.  
For more details about this procedure we refer the interested reader to the work of Liu and Siemons \cite{liu2022unlocking}. 

Let  $G$ and $H$ be two graphs with adjacency matrices $A$ and $B$, respectively. 
We write $C_G$ and $C_H$ for the companion matrices  of the characteristic polynomials of $G$ and $H$.  
It is easy to verify that $AW_G = W_GC_G$ and $BW_H = W_HC_H$. 
Assuming that $PW_G = W_H$, we find that $(PAP^T-B)W_H = W_H (C_G - C_H)$. 
If $G$ and $H$ are controllable, in particular $W_H$ is invertible, and so $PAP^T - B = W_H (C_G - C_H) W_H^{-1}$.
If, moreover, $G$ and $H$ are cospectral, then $C_G = C_H$. 
Therefore, using the last two assumptions, we have that $PAP^T = B$, and hence that $G$ and $H$ are isomorphic. 

In the process described above we have made three assumptions to conclude that the graphs are isomorphic. 
We observe now that the assumption about cospectrality is superfluous. 
We shall need the following elementary result. 

\begin{lemma}\label{key}
If the sets of vectors $\{\mathbf{u}_1,\dots,\mathbf{u}_m\}$ and $\{\mathbf{v}_1,\dots,\mathbf{v}_m\}$ satisfy $\mathbf{u}_i^T \mathbf{u}_j = \mathbf{v}_i^T \mathbf{v}_j$ for all $i$ and $j$, then there exists an orthogonal matrix $Q$ such that $Q\mathbf{u}_i = \mathbf{v}_i$ for all $i$. 
\end{lemma}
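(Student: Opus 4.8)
The plan is to prove this by induction on $m$, building the orthogonal matrix $Q$ one vector at a time. The statement is essentially that the Gram matrix determines a tuple of vectors up to a global orthogonal transformation, so the natural strategy is to show that the linear map sending $\mathbf{u}_i \mapsto \mathbf{v}_i$ is well-defined on the span of the $\mathbf{u}_i$'s and is an isometry there, then extend it to an orthogonal map on all of $\mathbf{R}^n$.

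First I would observe that the hypothesis $\mathbf{u}_i^T\mathbf{u}_j = \mathbf{v}_i^T\mathbf{v}_j$ means the two tuples have the same Gram matrix $M = (m_{ij})$. From this one gets immediately that any linear dependence among the $\mathbf{u}_i$ is also a linear dependence among the $\mathbf{v}_i$: if $\sum_i c_i \mathbf{u}_i = 0$, then $\|\sum_i c_i \mathbf{v}_i\|^2 = \sum_{i,j} c_i c_j \mathbf{v}_i^T\mathbf{v}_j = \sum_{i,j} c_i c_j \mathbf{u}_i^T\mathbf{u}_j = \|\sum_i c_i\mathbf{u}_i\|^2 = 0$, so $\sum_i c_i\mathbf{v}_i = 0$ as well (and symmetrically). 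Hence the assignment $\mathbf{u}_i \mapsto \mathbf{v}_i$ extends to a well-defined linear map $T \colon \span\{\mathbf{u}_1,\dots,\mathbf{u}_m\} \to \span\{\mathbf{v}_1,\dots,\mathbf{v}_m\}$, and by the same computation $T$ preserves inner products, hence is a linear isometry onto its image; in particular the two spans have the same dimension.

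Next I would extend $T$ to an orthogonal map on all of $\mathbf{R}^n$. Pick an orthonormal basis $\mathbf{e}_1,\dots,\mathbf{e}_k$ of $\span\{\mathbf{u}_i\}$ and an orthonormal basis $\mathbf{f}_1,\dots,\mathbf{f}_k$ of $\span\{\mathbf{v}_i\}$; since $T$ is an isometry, $T\mathbf{e}_1,\dots,T\mathbf{e}_k$ is also orthonormal, and we may complete both of these to orthonormal bases of $\mathbf{R}^n$ by adjoining vectors $\mathbf{e}_{k+1},\dots,\mathbf{e}_n$ and $\mathbf{g}_{k+1},\dots,\mathbf{g}_n$ spanning the respective orthogonal complements (which have equal dimension $n-k$). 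Define $Q$ to be the linear map with $Q\mathbf{e}_i = T\mathbf{e}_i$ for $i \le k$ and $Q\mathbf{e}_i = \mathbf{g}_i$ for $i > k$; this sends an orthonormal basis to an orthonormal basis, so $Q$ is orthogonal, and it agrees with $T$ on $\span\{\mathbf{u}_i\}$, so $Q\mathbf{u}_i = T\mathbf{u}_i = \mathbf{v}_i$ for all $i$.

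There is no serious obstacle here — the result is genuinely elementary — but the one point requiring a little care is the very first step: one must verify that equality of Gram matrices really does transfer linear relations (and not merely that it gives a map which is norm-preserving on the generators), since without well-definedness of $T$ the rest collapses. That is exactly the squared-norm computation above, and it is the crux of the argument.
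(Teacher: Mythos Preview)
Your proof is correct. One small quibble: you announce ``induction on $m$'' at the outset but then give a direct argument with no inductive step; the proof stands on its own without induction, so that opening sentence should be dropped or amended.

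Your approach differs from the paper's. You define the linear map $T$ on $\span\{\mathbf{u}_i\}$ directly, verify well-definedness and the isometry property via the Gram-matrix identity, and then extend to an orthogonal map on all of $\mathbf{R}^n$ by completing orthonormal bases. The paper instead builds $Q$ constructively as a product of Householder reflections: at each stage, given vectors $\mathbf{u}$ and $\mathbf{v}$ of equal norm with $\mathbf{u}^T\mathbf{u}_i = \mathbf{v}^T\mathbf{u}_i$ for the previously-handled $\mathbf{u}_i$, the reflection across the hyperplane orthogonal to $\mathbf{w} = \mathbf{u}-\mathbf{v}$ swaps $\mathbf{u}$ and $\mathbf{v}$ while fixing each $\mathbf{u}_i$ (since $\mathbf{w}^T\mathbf{u}_i = 0$), and one then inducts. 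Your argument is cleaner and more conceptual; the paper's is more explicit and exhibits $Q$ as a product of at most $m$ reflections, which is occasionally useful if one cares about the structure of $Q$.
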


\begin{proof} 
Let us suppose that, for some $k\leq m$, the vectors $\mathbf{u}_1,\dots,\mathbf{u}_k$ form a linearly independent set. 
Take two vectors $\mathbf{u}$ and $\mathbf{v}$ such that $\mathbf{u}^T\mathbf{u} = \mathbf{v}^T\mathbf{v}$ and $\mathbf{u}^T\mathbf{u}_i=\mathbf{v}^T\mathbf{u}_i$ for all $i=1,\dots,k$. 
We show that there is a reflection swapping $\mathbf{u}$ and $\mathbf{v}$ and fixing all the vectors $\mathbf{u}_1,\dots,\mathbf{u}_k$. 
If we write $\mathbf{w} = \mathbf{u}-\mathbf{v}$, then we define the linear transformation $R_\mathbf{w}$ by \[R_\mathbf{w} \mathbf{x} = \mathbf{x} - \frac{2\mathbf{w}^T\mathbf{x}}{\mathbf{w}^T\mathbf{w}}\mathbf{w}\] for all $\mathbf{x}$. 
It is clear that $R_\mathbf{w}\mathbf{w}=-\mathbf{w}$ and $R_\mathbf{w}\mathbf{x} = \mathbf{x}$ for all $\mathbf{x} \in \mathbf{w}^{\bot}$. 
Since $R_\mathbf{w}$ is a reflection, we have that $R_\mathbf{w}$ determines an orthogonal matrix $Q$ such that $Q\mathbf{u} = \mathbf{v}$ and $Q\mathbf{u}_i = \mathbf{u}_i$ for all $i$; induction on $k$ establishes the assertion. 
\end{proof}

An immediate consequence of Lemma \ref{key} is that if $U$ and $V$ are the matrices whose columns are the sets of vectors $\{\mathbf{u}_1,\dots,\mathbf{u}_m\}$ and $\{\mathbf{v}_1,\dots,\mathbf{v}_m\}$ respectively and if $U^TU = V^TV$, then there is an orthogonal matrix $Q$ such that $QU=V$. 
Now, the \emph{extended walk matrices} of $G$ and $H$ are the $n\times (n+1)$ matrices $\widehat{W}_G$ and $\widehat{W}_H$ defined by 
\[
\widehat{W}_G = \big[ \1\quad A\1\quad \cdots \quad A^{n}\1 \big]\qquad\textnormal{and}\qquad\widehat{W}_H = \big[ \1\quad B\1\quad \cdots \quad B^{n}\1 \big].
\]
It follows from Lemma \ref{key} that if $\widehat{W}_G^T\widehat{W}_G = \widehat{W}_H^T\widehat{W}_H$, then there is an orthogonal matrix $Q$ such that $Q\widehat{W}_G = \widehat{W}_H$. 
This implies that $QA^r\1 = B^r\1$ for all $r=0,\dots,n$. 
Since $Q\1 = \1$, then we have  $QA^nQ^T \1 = QA^nQ^TQ\1 =QA^n\1 = B^n\1$. 
From this and from the definition of the companion matrix, we can conclude that the characteristic polynomials of $QAQ^T$ and $B$ have exactly the same coefficients, and hence that $G$ and $H$ are cospectral. 
Finally, if we assume that $P\widehat{W}_G = \widehat{W}_H$ for some permutation matrix $P$, then we see that $\widehat{W}_G^T\widehat{W}_G = \widehat{W}_G^TP^TP\widehat{W}_G = \widehat{W}_H^T\widehat{W}_H$ and also that $PW_G = W_H$. 
Therefore, if two graphs are controllable and if their extended walk matrices are equal up to a permutation of the rows, then the graphs are isomorphic. 
This completes our argument.

\section*{Acknowledgments}
The research of A. Abiad is partially supported by the FWO grant 1285921N. The research of O. Zapata is partially supported by the SNI grant 620178.


\bibliography{main}
\bibliographystyle{plain}


\end{document}